\newcommand\blfootnote[1]{%
  \begingroup
  \renewcommand\thefootnote{}\footnote{#1}%
  \addtocounter{footnote}{-1}%
  \endgroup
}
\newtheorem{thm}{Theorem}[section]
\newtheorem{lem}[thm]{Lemma}
\newtheorem{prop}[thm]{Proposition}
\newtheorem{claim}[thm]{Claim}
\theoremstyle{definition}
\newtheorem{definition}{Definition}[section]
\theoremstyle{remark}
\DeclareMathOperator{\Det}{Det}
\DeclareMathOperator{\sgn}{sgn}
\DeclareMathOperator{\Cof}{Cof}
\DeclareMathOperator{\size}{size}
\DeclareMathOperator{\col}{col}
\DeclareMathOperator{\row}{row}
\begin{document}

\nocite{*}

\title{Determinants of Block Matrices with Noncommuting Blocks}
\date{}

\author{Nat Sothanaphan}

\maketitle

\begin{abstract}
Let $M$ be an $mn\times mn$ matrix over a commutative ring $R$.
Divide $M$ into $m \times m$ blocks. Assume that the blocks commute pairwise.
Consider the following two procedures:
(1) Evaluate the $n \times n$ determinant formula at these blocks
to obtain an $m \times m$ matrix, and take the determinant again
to obtain an element of $R$;
(2) Take the $mn \times mn$ determinant of $M$.
It is known that the two procedures give the same element of $R$.
We prove that if only certain pairs of blocks of $M$ commute, then the two procedures
still give the same element of $R$, for a suitable definition of noncommutative
determinants. We also derive from our result further collections of commutativity
conditions that imply this equality of determinants, and we prove that our original
condition is optimal under a particular constraint.
\end{abstract}

{\bf MSC:} 15A15

{\bf Keywords:} Noncommutative determinant, Block matrix

\blfootnote{\url{http://dx.doi.org/10.1016/j.laa.2016.10.004}}
\blfootnote{\copyright\: 2018. This manuscript version is made available under the CC-BY-NC-ND 4.0 license \protect\url{http://creativecommons.org/licenses/by-nc-nd/4.0/}}

\section{Introduction}

Let $R$ be a commutative ring. For a positive integer $m$, let $M_{m}(R)$ be the ring of
$m \times m$ matrices with entries in $R$. Fix positive integers $m$ and $n$, and let
$M\in M_{mn}(R)$. We can view $M$ as either an $mn\times mn$ matrix over $R$
or an $n \times n$ \emph{block} matrix of $m \times m$ matrices over $R$. Thus
we can identify $M_{mn}(R)$ with $M_{n}(M_{m}(R))$.
Block matrices behave in an analogous way to usual matrices with regard to
addition, subtraction, and multiplication, so that these operations can be carried out
blockwise or elementwise with no difference. With determinants, however, the
analogy breaks down. In general, the determinants of block matrices are not
even \emph{defined}, since $M_{m}(R)$ is usually not commutative. However, if
all blocks of $M$ commute, then the determinant of $M$ over $M_{m}(R)$ not only
is well-defined (the result will be another matrix in $M_{m}(R)$), but also has a close
relationship with the usual determinant of $M$ over $R$, as we will see later
in this section.

Let $\det_{R}M$ denote the determinant of $M$ as a matrix over $R$.
In \cite{bourbaki}, pp. 546-7, Bourbaki
proves the following result (alternative proofs may be found in \cite{kovacs} and
\cite{silvester}):

\begin{thm} \label{thm:bourbaki}
Let $R$ be a commutative ring and let $S$ be a \emph{commutative} subring of $M_m(R)$.
Then for any matrix $M \in M_{n}(S) \subset M_{mn}(R)$, we have
\begin{equation}\label{eq:det}
{\det}_{R}({\det}_{S}M) = {\det}_{R}M.
\end{equation}
\end{thm}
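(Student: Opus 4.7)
The plan is to prove the identity by induction on $n$, first under the extra hypothesis that the $(1,1)$-block $A_{11}$ is invertible in $M_m(R)$, and then to remove that hypothesis by a generic perturbation argument. The base case $n=1$ is immediate: $M=A_{11}\in S$, so $\det_S M = A_{11}$ and both sides of \eqref{eq:det} equal $\det_R A_{11}$.

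For the inductive step, assume the result for $(n-1)\times(n-1)$ block matrices and suppose first that $A_{11}$ is invertible in $M_m(R)$. Since $S$ is commutative and every $B\in S$ commutes with $A_{11}$, it also commutes with $A_{11}^{-1}$, so we may enlarge $S$ to a commutative subring $S'\subseteq M_m(R)$ containing $A_{11}^{-1}$ without changing $\det_S M$. I would then apply the block row operations subtracting $A_{i1}A_{11}^{-1}$ times block row $1$ from block row $i$ for $i\geq 2$; viewed either as an $n\times n$ matrix over $S'$ or as an $mn\times mn$ matrix over $R$, each operation is left multiplication by a block lower-triangular matrix with identity diagonal blocks, which has determinant $1$ in either ring. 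Both $\det_{S'}M$ and $\det_R M$ are therefore preserved. The resulting matrix is block upper triangular with $A_{11}$ in position $(1,1)$ and Schur complement $M'' = (A_{ij}-A_{i1}A_{11}^{-1}A_{1j})_{i,j\geq 2}$ in the lower right. Cofactor expansion along the first block column yields $\det_{S'}M = A_{11}\cdot\det_{S'}M''$, while the standard block-triangular formula in $M_{mn}(R)$ gives $\det_R M = \det_R(A_{11})\cdot\det_R M''$. Applying $\det_R$ to the first equation and using multiplicativity together with the induction hypothesis on $M''$ yields $\det_R(\det_{S'}M) = \det_R(A_{11})\cdot\det_R M'' = \det_R M$, as required.

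To remove the invertibility assumption, I would pass to $R[t]$ and replace $M$ by $\tilde M = M + tE_{11}$, where $E_{11}$ has $I_m$ in its $(1,1)$-block and zeros elsewhere. The new $(1,1)$-block $A_{11}+tI_m$ has determinant $f(t) = \det_R(A_{11}+tI_m) \in R[t]$, which is monic of degree $m$ in $t$ and hence a non-zerodivisor. Consequently $R[t]\hookrightarrow R[t][1/f]$ is injective, and over $R[t][1/f]$ the block $A_{11}+tI_m$ is invertible. Applying the first part of the argument to $\tilde M$ over the commutative subring $S\otimes_R R[t][1/f]$ of $M_m(R[t][1/f])$ yields \eqref{eq:det} for $\tilde M$ over $R[t][1/f]$; by injectivity this identity descends to $R[t]$, and specializing $t=0$ recovers \eqref{eq:det} for $M$ over $R$. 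The hard part is making this perturbation fully rigorous: one must verify that $f$ really is a non-zerodivisor in $R[t]$, that $\det_R$ and $\det_S$ are compatible with the base change $R\to R[t]$ and with localization at $f$, and that specializing the resulting $R[t]$-identity at $t=0$ produces \eqref{eq:det} for the original $M$.
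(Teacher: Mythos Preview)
Your argument is correct. The inductive Schur-complement reduction works once $A_{11}$ is invertible, and the perturbation $A_{11}\mapsto A_{11}+tI_m$ together with the monic-polynomial trick removes that hypothesis; the points you flag as ``hard'' (monic implies non-zerodivisor, compatibility of determinants with base change and with evaluation at $t=0$) are all routine polynomial-identity facts.

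The paper's proof is genuinely different in mechanism, though it shares the same inductive skeleton and the same monic-polynomial cancellation idea. Instead of inverting the $(1,1)$-block and forming a Schur complement, Bourbaki perturbs \emph{every} diagonal block, $N=M+zI$, and then right-multiplies $N$ by the matrix $U$ whose first column is the first column of the transposed cofactor matrix of $N$ over $S[z]$. The adjugate relation makes the first block-column of $NU$ equal to $(\det_{S[z]}N,0,\dots,0)^t$, so taking $\det_{R[z]}$ of $NU$ factors as $\det_{R[z]}(\det_{S[z]}N)\cdot\det_{R[z]}Q$, where $Q$ is the lower-right $(n-1)\times(n-1)$ block of $N$. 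The induction hypothesis identifies $\det_{R[z]}Q$ with $\det_{R[z]}U$, and since this quantity is monic in $z$ it can be cancelled. No inversion or localization is ever needed: the argument stays inside $R[z]$ and $S[z]$ throughout.

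What each buys: your Schur-complement route is the more familiar Gaussian-elimination picture and makes the role of the $(1,1)$-block transparent, at the cost of localizing and tracking the enlarged ring $S'$. The cofactor route is slightly slicker algebraically, and more to the point it is the version that adapts to the paper's main Theorem~\ref{thm:f}, where the blocks no longer all commute: there one cannot form $A_{11}^{-1}$ and expect it to commute with the other blocks, but the first-column adjugate identity survives under the weaker hypothesis~$\mathcal{F}$.
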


Our main result strengthens Theorem \ref{thm:bourbaki} by relaxing the hypothesis that
\emph{all pairs of blocks commute}. Specifically, our main result is the following:

\begin{thm} \label{thm:f}
Let $R$ be a commutative ring and let $S$ be a \emph{not-necessarily-commutative}
subring of $M_m(R)$. Then for any matrix $M \in M_{n}(S) \subset M_{mn}(R)$
such that
\begin{equation}\label{eq:f}
M_{ij} \text{ commutes with } M_{kl} \text{ whenever } i\neq 1, k \neq 1, \text{and } j \neq l,
\end{equation}
where $M_{ij}$ is the $(i,j)$ entry of $M$ viewed as an $n \times n$ matrix over $S$, we have
\begin{equation}\label{eq:det2}
{\det}_{R}({\Det}_{S}M) = {\det}_{R}M,
\end{equation}
where ${\Det}_{S}M$ is the \emph{noncommutative} determinant of $M$ over $S$, defined
as in Definition \ref{def:det}.
\end{thm}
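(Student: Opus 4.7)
The plan is to prove Theorem \ref{thm:f} by induction on $n$, mimicking the classical adjugate identity. The base case $n = 1$ is immediate, so assume $n \geq 2$ and the result for $n - 1$.

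Construct an $n \times n$ block matrix $X \in M_n(M_m(R))$ that is block lower triangular, with $X_{11} = \Det_S M^{(1,1)}$, $X_{j1} = (-1)^{1+j}\Det_S M^{(1,j)}$ for $j \geq 2$, $X_{jj} = I_m$ for $j \geq 2$, and all other blocks zero. Here $M^{(1,j)}$ denotes the $(n-1) \times (n-1)$ minor obtained by deleting row $1$ and column $j$; any two of its blocks in different columns commute by \eqref{eq:f}, so $\Det_S M^{(1,j)}$ equals the Leibniz sum in any ordering of factors. Block triangularity yields $\det_R X = \det_R \Det_S M^{(1,1)}$. I claim that $MX$ is block upper triangular in the $(m, m(n-1))$ splitting, with $(1,1)$ block equal to $\Det_S M$ (from the first-row expansion $\sum_j M_{1j} X_{j1} = \Det_S M$), zeros elsewhere in the first block column, and remaining columns copying those of $M$.

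Verifying that $(MX)_{i1} = 0$ for $i \geq 2$ is the main obstacle. It reduces to the identity
\[
\sum_{j=1}^n (-1)^{1+j}\, M_{ij}\, \Det_S M^{(1,j)} = 0 \qquad (i \geq 2),
\]
whose left-hand side expands to $\sum_{\sigma \in S_n} \sgn(\sigma)\, M_{i,\sigma(1)} \prod_{k \geq 2} M_{k,\sigma(k)}$. Pair $\sigma$ with $\sigma' = \sigma \circ (1\,i)$, a fixed-point-free involution of $S_n$ since $i \neq 1$. The paired products differ only in that $M_{i,\sigma(1)}$ and $M_{i,\sigma(i)}$ swap positions; by \eqref{eq:f}, these two entries commute with each other (both in row $i \geq 2$, with columns $\sigma(1) \neq \sigma(i)$) and each commutes with every intermediate factor $M_{k,\sigma(k)}$ for $k \notin \{1,i\}$ (row $k \geq 2$, column $\sigma(k) \notin \{\sigma(1),\sigma(i)\}$). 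Hence the two products coincide and the signed pair cancels.

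Taking $R$-determinants of the two triangular forms and using $\det_R(MX) = \det_R M \cdot \det_R X$ gives
\[
\det_R M \cdot \det_R \Det_S M^{(1,1)} = \det_R \Det_S M \cdot \det_R M^{(1,1)}.
\]
The inductive hypothesis applies to $M^{(1,1)}$, whose blocks satisfy \eqref{eq:f} (in fact a stronger form), so $\det_R \Det_S M^{(1,1)} = \det_R M^{(1,1)}$. To cancel the common factor $\det_R M^{(1,1)}$, which need not be a nonzerodivisor, extend scalars to $R[t]$ and replace each $M_{ii}$ by $M_{ii} + tI_m$ for $i \geq 2$; this preserves \eqref{eq:f} (since $tI_m$ is central) and makes $\det_R M^{(1,1)}$ monic of degree $m(n-1)$ in $t$, hence a nonzerodivisor in $R[t]$. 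Cancellation followed by specialization at $t = 0$ completes the proof.
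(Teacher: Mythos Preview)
Your proof is correct and follows essentially the same route as the paper. Both arguments proceed by induction on $n$, construct the same auxiliary matrix (your $X$ is the paper's $U$, obtained from the identity by replacing the first column with the first-row cofactors of $M$), verify that $MX$ has first block-column $(\Det_S M,0,\ldots,0)^t$ (the paper isolates this as a separate lemma, while you give a direct pairing argument for the cancellation---these are the same computation), take $R$-determinants of the resulting block-triangular product, apply the inductive hypothesis to the lower-right minor $M^{(1,1)}$, and pass to a polynomial ring to make the factor to be cancelled a monic polynomial and hence a nonzerodivisor. The only cosmetic difference is that the paper adjoins the variable to every diagonal block from the outset, whereas you do so only for $i\ge 2$ and only after seeing the need to cancel; either works.
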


For example, when $M$ is a $2 \times 2$ block matrix, Theorem \ref{thm:f} states: if $M = 
\begin{pmatrix} 
A & B \\
C & D
\end{pmatrix}$ and $CD=DC$, then ${\det}_{R}M = {\det}_{R}(AD-BC)$.
This $2 \times 2$ result appears also in \cite{silvester}.

Theorem \ref{thm:f} will be proved in Section \ref{sect:main}. First, in Section
\ref{sect:term}, we establish some terminology. In Section \ref{sect:review}
we reprove Theorem \ref{thm:bourbaki}. Then, by modifying Bourbaki's proof, we prove
Theorem \ref{thm:f} in Section \ref{sect:main}. In Section \ref{sect:variant}
we use Theorem \ref{thm:f} to prove two variants of itself, and in Section \ref{sect:classify}
we apply them to classify all commutativity conditions on $2 \times 2$ matrices that suffice
to imply \eqref{eq:det2}. Finally, in Section \ref{sect:optimal}, we prove that Theorem
\ref{thm:f} is optimal under a certain constraint.

\section{Terminology}
\label{sect:term}

\begin{definition} \label{def:det}
Let $R$ be a (not-necessarily-commutative) ring and let $M$ be a matrix in $M_{n}(R)$.
A \emph{noncommutative determinant} of $M$ over $R$, denoted by $\Det_{R}M$,
is defined by
\begin{equation}
{\Det}_{R}M = \sum_{\pi \in S_{n}} \sgn(\pi) M_{1,\pi(1)}M_{2,\pi(2)}\cdots M_{n,\pi(n)},
\end{equation}
where $S_{n}$ is the symmetric group of degree $n$.
\end{definition}

In fact, there are many ways to define a determinant over a noncommutative ring,
since the order of entries matters, but we choose this one for its simplicity.
In \cite{noncomdet}, p. 2, this choice is called the \emph{row-determinant},
since the entries in each product are ordered according to their row numbers,
in contrast to the \emph{column-determinant}, in which the entries are ordered
according to their column numbers. See \cite{noncomdet} for more properties regarding these
determinants. If all entries commute or if $R$ is a \emph{commutative} ring, then
the order does not matter and a noncommutative determinant reduces
to the usual one.

For each positive integer $n$, let $V_n := \{(i,j) : 1 \leq i,j \leq n \}$.

\begin{definition}
Let $R$ be a ring and $M$ a matrix in $M_{n}(R)$. The \emph{commutativity graph}
of $M$ is the graph on $V_n$ such that there is an edge between vertices $(i,j)$ and
$(k,l)$ if and only if entries $M_{ij}$ and $M_{kl}$ commute.
\end{definition}

\begin{definition}
A \emph{commutativity condition} is a graph with vertex set $V_n$ for some
$n \geq 1$. We call $n$ the \emph{size} of $G$, and denote it by $\size(G)$.
For any ring $R$, matrix $M \in M_{n}(R)$ and commutativity condition $G$
of size $n$, we say that $M$ \emph{satisfies} $G$ if the commutativity graph
of $M$ contains $G$ as a subgraph.
\end{definition}

\begin{definition}
A commutativity condition $G$ of size $n$ is a \emph{sufficient commutativity
condition (SCC)} if for every commutative ring $R$, positive integer $m$, ring
$S \subset M_{m}(R)$, and matrix $M \in M_{n}(S)\subset M_{mn}(R)$,
if $M$ satisfies $G$ as a matrix over $S$, then
\begin{equation}\label{eq:det3}
{\det}_{R}({\Det}_{S}M) = {\det}_{R}M.
\end{equation}
\end{definition}

In other words, a sufficient commutativity condition is a commutativity condition that
guarantees that \eqref{eq:det3} holds for every $M$ that satisfies it. In this paper, when
we talk about a block matrix $M \in M_{n}(S)\subset M_{mn}(R)$ satisfying a
commutativity condition $G$, we always assume that $M$ is viewed as a matrix over $S$.
Thus, we can now restate Theorem \ref{thm:bourbaki} in a succinct form as follows:
for any positive integer $n$, the complete graph on $V_n$ is a sufficient
commutativity condition.

\begin{definition}
Let $\mathcal{C}$ be a collection of commutativity conditions.
For any ring $R$ and any matrix $M \in M_{n}(R)$, we say that $M$ \emph{satisfies}
$\mathcal{C}$ if $M$ satisfies at least one member of $\mathcal{C}$.
\end{definition}

More definitions regarding \emph{operations} on commutativity conditions
and collections of commutativity conditions can be found in Section \ref{sect:variant}.

\section{Review of Bourbaki's Proof}
\label{sect:review}

The following proof of Theorem \ref{thm:bourbaki} appears in \cite{bourbaki}, pp. 546-7.
However, since Bourbaki's notation differs greatly from ours and since this proof
will be used directly in the proof of Theorem \ref{thm:f} in Section \ref{sect:main},
we reproduce the proof in full here.

For any commutative ring $A$, any positive integer $k$ and any matrix $X \in M_{k}(A)$, let
$\Cof_A X$ denote the cofactor matrix of $X$ over $A$; then
\begin{equation} \label{eq:cof}
X (\Cof_{A} X)^t = ({\det}_A X)I_k.
\end{equation}

Denote the $(i,j)$ entry of $\Cof_A X$ by $\Cof_A^{ij}X$.

\begin{proof} [Proof of Theorem \ref{thm:bourbaki}]

We proceed by induction on $n$. The case $n=1$ is trivial. Now suppose that $n>1$.
Let $R[z]$ and $S[z]$ be polynomial rings with variable $z$. We identify $S[z]$ with
$M_{m}(R[z])$. Let $N \in M_{n}(S[z])$ be the matrix with entries
\begin{equation} \label{eq:N}
N_{ij}=M_{ij}+\delta_{ij}zI_m,
\end{equation}
where $M$ and $N$ are viewed as matrices over $S$ and $S[z]$, respectively, and
$\delta_{ij}$ is the Kronecker delta. In other words, $N$ is the matrix derived
from $M$ by adding $z$ to each diagonal entry.

Define $U \in M_{n}(S[z])$ by
\begin{equation} \label{eq:U}
U =
\begin{pmatrix} 
\Cof_{S[z]}^{11}N & 0 & \cdots & 0 \\
\Cof_{S[z]}^{12}N & I_m & &\\
\vdots & & \ddots &  \\
\Cof_{S[z]}^{1n}N &  & & I_m
\end{pmatrix}.
\end{equation}
Notice that $U$ is obtained from the identity matrix by replacing its first column
with the first column of $(\Cof_{S[z]} N)^t$. Equation \eqref{eq:cof} explains the 
first column in the product
\begin{equation} \label{eq:NU}
NU =
\begin{pmatrix} 
{\det}_{S[z]}N & N_{12} & \cdots & N_{1n} \\
0 & N_{22} & \cdots & N_{2n}\\
\vdots & \vdots & \ddots & \vdots \\
0 &  N_{n2} & \cdots & N_{nn}
\end{pmatrix},
\end{equation}
which is obtained from the matrix $N$ by replacing its first column with the first column of
$({\det}_{S[z]} N)I_n$. Now, let
\begin{equation} \label{eq:Q}
Q =
\begin{pmatrix} 
N_{22} & \cdots & N_{2n}\\
\vdots & \ddots & \vdots \\
N_{n2} & \cdots & N_{nn}
\end{pmatrix}.
\end{equation}
By taking the determinant in \eqref{eq:NU},
\begin{equation} \label{eq:deteq}
{\det}_{R[z]}N \cdot {\det}_{R[z]}U = {\det}_{R[z]}({\det}_{S[z]}N) \cdot {\det}_{R[z]}Q.
\end{equation}

We need to show that ${\det}_{R[z]}N = {\det}_{R[z]}({\det}_{S[z]}N)$.
Since $Q \in M_{n-1}(S[z])$, by the induction hypothesis,
\begin{align} \label{eq:deteq2}
{\det}_{R[z]}Q &= {\det}_{R[z]}({\det}_{S[z]}Q ) \nonumber  \\ 
&= {\det}_{R[z]}(\Cof_{S[z]}^{11}N) & \text{(by definition of cofactors)} \\
&= {\det}_{R[z]}U, \nonumber
\end{align}
by the determinant formula for a $2 \times 2$ block matrix whose top-right block is zero.
Since ${\det}_{R[z]}Q$ is a monic polynomial in $z$, it is not a
\emph{divisor of zero} in $R[z]$. Cancelling \eqref{eq:deteq2} in \eqref{eq:deteq},
we have ${\det}_{R[z]}N = {\det}_{R[z]}({\det}_{S[z]}N)$. Setting $z=0$, we obtain
${\det}_{R}M = {\det}_R({\det}_{S}M)$ as needed.
\end{proof}

\section{Main Theorem}
\label{sect:main}

We are now ready to prove Theorem \ref{thm:f}. First, we define the collection of
commutativity conditions in accordance with \eqref{eq:f}.

\begin{definition}
For each $n \geq 1$, let $F_n$ be the graph on $V_n$ with the property that there is
an edge between vertices $(i,j)$ and $(k,l)$ if and only if $i \neq 1, k \neq 1$, and $j \neq l$.
Let $\mathcal{F} := \{F_n : n \geq 1\}$, viewed as a collection of commutativity conditions.
\end{definition}

The meaning of $\mathcal{F}$ is the following: a block matrix $M$ satisfies $\mathcal{F}$ if all of
its pairs of blocks that are neither in the first row nor in the same column commute.
In other words, for each $n \geq 1$, $F_n$ is a complete $n$-partite
graph with sets of vertices $S_1, S_2, \ldots,S_n$ such that $S_j = \{(i,j) : 2 \leq i \leq n \}$
for all $j$. 

With this, Theorem \ref{thm:f} now states:
The collection $\mathcal{F}$ is a collection of sufficient commutativity conditions.

The proof of Theorem \ref{thm:f} mimics the proof of Theorem \ref{thm:bourbaki}
in Section \ref{sect:review}. We first start with a lemma.
For any (not-necessarily-commutative) ring $A$ and any matrix
$X \in M_{k}(A)$, let $\Cof_{A}^{ij}X$ be the $(i,j)$ cofactor of $X$ over $A$,
defined in terms of \emph{noncommutative} determinants as
$$\Cof_{A}^{ij}X = (-1)^{i+j}\Det_{A}^{ij}X,$$
where $\Det_{A}^{ij}X$ is the noncommutative determinant of $X$ after row $i$ and
column $j$ have been removed.

\begin{lem} \label{lem:f}
For any ring $A$, any positive integer $k$ and any matrix $X \in M_{k}(A)$
satisfying $\mathcal{F}$, we have
\begin{equation} \label{eq:cof2}
\begin{pmatrix} 
X_{11} & \cdots & X_{1k} \\
X_{21} & \cdots & X_{2k} \\
\vdots & \ddots & \vdots \\
X_{k1} & \cdots & X_{kk}
\end{pmatrix}
\begin{pmatrix} 
\Cof_{A}^{11}X \\
\Cof_{A}^{12}X \\
\vdots  \\
\Cof_{A}^{1k}X
\end{pmatrix}  =
\begin{pmatrix} 
\Det_{A}X \\
0 \\
\vdots \\
0
\end{pmatrix}.
\end{equation}
\end{lem}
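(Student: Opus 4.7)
The plan is to verify equation \eqref{eq:cof2} entry by entry. Expanding $\Cof_A^{1j} X = (-1)^{1+j} \sum_\tau \sgn(\tau)\, X_{2,\tau(2)} \cdots X_{k,\tau(k)}$ as a sum over bijections $\tau \colon \{2, \ldots, k\} \to \{1, \ldots, k\} \setminus \{j\}$, and identifying each pair $(j, \tau)$ with the permutation $\pi \in S_k$ defined by $\pi(1) = j$ and $\pi|_{\{2, \ldots, k\}} = \tau$, the $i$th entry $\sum_{j=1}^{k} X_{ij}\,\Cof_A^{1j} X$ of the left-hand product rearranges into $\sum_{\pi \in S_k} \sgn(\pi)\, X_{i,\pi(1)} X_{2,\pi(2)} \cdots X_{k,\pi(k)}$. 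This rearrangement uses only the identity $(-1)^{1+j}\sgn(\tau) = \sgn(\pi)$ and requires no commutativity, because $X_{ij}$ already sits to the left of the products coming from $\Cof_A^{1j} X$, matching the row-ordering used by the row-determinant.

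When $i = 1$ this sum is literally $\Det_A X$ by Definition \ref{def:det}, so the first entry has the correct form. For $i \geq 2$, I will show the sum vanishes by a sign-reversing involution. Pair each $\pi$ with $\pi' := \pi \circ (1\,i)$; since $i \neq 1$, this is a fixed-point-free involution on $S_k$ that flips the sign. Writing $a = \pi(1)$ and $b = \pi(i)$, the summands for $\pi$ and $\pi'$ differ only in that the factor pair $X_{ia}$ (leftmost) and $X_{ib}$ (at position $i$) is swapped, all other factors being unchanged. Each intervening factor $X_{r,\pi(r)}$ (with $r \in \{2, \ldots, k\}\setminus\{i\}$) has $r \neq 1$ and $\pi(r) \notin \{a, b\}$, so the condition $F_k$ makes it commute with both $X_{ia}$ and $X_{ib}$; hence I may slide those two factors past everything between them and group them together. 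A final appeal to $F_k$ (same row $i \neq 1$, distinct columns $a \neq b$) gives $X_{ia} X_{ib} = X_{ib} X_{ia}$, so the two summands cancel.

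The main obstacle is purely bookkeeping: confirming that $F_k$ authorizes exactly the commutations the involution needs. For $i \geq 2$ every factor in every summand lies in a row $\geq 2$, which handles the ``row $\neq 1$'' clause of $F_k$ for free; the column-distinctness clause is handled by the bijectivity of $\pi$ for the intermediate commutations and directly by $a \neq b$ for the final cancellation. In particular, at no point does the argument need to commute two entries from the first row, which is precisely the case $F_k$ refuses to permit. Summing the cancellations over all pairs $\{\pi, \pi'\}$ yields $0$ for $i \geq 2$ and combines with the $i = 1$ case to give \eqref{eq:cof2}.
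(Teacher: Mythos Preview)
Your proof is correct and takes essentially the same approach as the paper. For $i\geq 2$ the paper likewise observes that every factor in each monomial lies in a row $\geq 2$ and that no two factors share a column, so $\mathcal{F}$ makes them all pairwise commute; it then appeals directly to the commutative identity \eqref{eq:cof3} rather than spelling out your sign-reversing involution $\pi\mapsto\pi\circ(1\,i)$, but this is only a difference in presentation.
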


\begin{proof}
Since \eqref{eq:cof2} is the first column of \eqref{eq:cof}, it is true in a commutative ring.
For the topmost entry, we need to prove that
$$X_{11}\Cof_{A}^{11}X+ X_{12}\Cof_{A}^{12}X + \cdots + X_{1k}\Cof_{A}^{1k}X = \Det_{A}X.$$
Since the entries in each product are ordered according to their row numbers, and
since the cofactors and determinants involve entries only from row $2$
to row $n$, the ordering on both sides matches. The two sides must be equal.

Let $2 \leq i \leq k$ be an integer. For the $i$th entry from the top, we need to show that
\begin{equation} \label{eq:cof3}
X_{i1}\Cof_{A}^{11}X + X_{i2}\Cof_{A}^{12}X + \cdots + X_{ik}\Cof_{A}^{1k}X = 0.
\end{equation}
Let $1 \leq j \leq k$. Let $P$ be a monomial in the expansion of $X_{ij}\Cof_{A}^{1j}X$. Then,
$P$ contains only entries of $X$ from
row $2$ to row $n$, no two of which are in the same column. By $\mathcal{F}$ all entries in $P$
commute. We can reorder indeterminates in $P$ according to their row numbers, and if their row numbers
are the same, by their column numbers. This, together with the identity \eqref{eq:cof3} in the
commutative case, implies that it holds in this setting too.
\end{proof}

\begin{proof} [Proof of Theorem \ref{thm:f}]
Assuming Lemma \ref{lem:f}, the proof of Theorem \ref{thm:f} follows the same steps as
the proof of Theorem \ref{thm:bourbaki}. We only need to note two things. First, $M$
over $S$ has the same commutativity graphs as $N$ over $S[z]$.
Second, since $N$ satisfies $\mathcal{F}$,
any two entries of $Q$  that are not in the first row nor in the same column commute. Hence
$Q \in M_{n-1}(S[z])$ \emph{also} satisfies $\mathcal{F}$. This allows us to use the
induction hypothesis.
\end{proof}

The commutativity conditions in Theorem \ref{thm:f} may seem unnatural. However,
in Section \ref{sect:optimal} we will see that the collection $\mathcal{F}$ is \emph{minimal},
in the sense that no edge can be removed from a member of $\mathcal{F}$ so that it is still
a sufficient commutativity condition.

Alternative proofs of Theorem \ref{thm:bourbaki} can be found in \cite{kovacs} and
\cite{silvester}. It may be that these too can be adapted to prove Theorem \ref{thm:f}.

\section{Variants of $\mathcal{F}$}
\label{sect:variant}

We now turn to the general theory of sufficient commutativity conditions
(SCCs). We first introduce two lemmas: \emph{Column Permuting Lemma} and
\emph{Transpose Lemma}. Then, we apply the two lemmas to derive two variants
of $\mathcal{F}$: $\mathcal{F}_\text{side}$ and $\mathcal{F}_\text{down}$.

\subsection{The Column Permuting Lemma}

Let $n$ and $m$ be positive integers with $n \geq m$. For any permutation $\pi \in S_m$,
define $\pi' \in S_n$ by $\pi'(i) = \pi(i)$ for all $i \leq m$ and $\pi'(i) = i$ for all $i > m$.
\begin{definition}
Let $n \geq m$. Let $G$ be a commutativity condition of size $n$ and let $\pi \in S_m$ be
a permutation. Then $G_{\col, \pi}$ is the graph on $V_n$ with edges
$\{ (i,\pi'(j)),(k,\pi'(l)) \}$ for all edges $\{ (i,j),(k,l) \}$ of $G$.

Analogously, $G_{\row, \pi}$ is the graph on $V_n$ with edges
$\{ (\pi'(i),j),(\pi'(k),l) \}$ for all edges $\{ (i,j),(k,l) \}$ of $G$.
\end{definition}

\begin{definition}
Let $G$ be a commutativity condition of size $n$. The \emph{transpose}
of $G$, denoted by $G^t$,
is the graph on $V_n$ with edges $\{ (j,i),(l,k) \}$
for all edges $\{ (i,j),(k,l) \}$ of $G$.
\end{definition}

%$G_{\col, \pi}$ and $G_{\row, \pi}$ are commutativity
%conditions obtained by permuting groups of vertices in $G$ that correspond to
%columns and rows of a matrix according to ${\pi}$, respectively; $G^t$ is obtained
%by interchanging rows and columns of a matrix that vertices of $G$ correspond to.

\begin{definition}
Let $\mathcal{C}$ be a collection of commutativity conditions and let $\pi \in S_n$ be a permutation.
Define the collections
\begin{align*}
\mathcal{C}_{\col, \pi} &:= \{G_{\col, \pi} : G \in \mathcal{C}, \size(G) \geq n \} \\
\mathcal{C}_{\row, \pi} &:= \{G_{\row, \pi} : G \in \mathcal{C}, \size(G) \geq n \} \\
\mathcal{C}^t &:= \{G^t : G \in \mathcal{C} \}.
\end{align*}
\end{definition}

\begin{lem} [Column Permuting Lemma]
Let $n \geq m$. Let $G$ be a commutativity condition of size $n$,
and let $\pi \in S_m$ be a permutation. Then
$$G \text{ is an SCC} \iff G_{\col,\pi} \text{ is an SCC}.$$
\end{lem}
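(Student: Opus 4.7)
The plan is to translate statements about $G_{\col,\pi}$ into statements about $G$ via an invertible column permutation, then match the two sides of \eqref{eq:det3} up to a common scalar. Given $M \in M_n(S)$, I would define $M' \in M_n(S)$ by $M'_{ij} := M_{i,\pi'(j)}$; that is, $M'$ is the block matrix obtained from $M$ by permuting its block columns according to $\pi'$. Unwinding the definition of $G_{\col,\pi}$, the edge $\{(i,j),(k,l)\}$ lies in $G$ exactly when $\{(i,\pi'(j)),(k,\pi'(l))\}$ lies in $G_{\col,\pi}$, so $M$ satisfies $G_{\col,\pi}$ if and only if $M'$ satisfies $G$.

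Next I would compute how the two sides of \eqref{eq:det3} transform. Permuting $n$ block columns of width $m$ by $\pi'$ realizes on the $mn\times mn$ level as $m$ independent copies of $\pi'$ acting on disjoint coordinate sets, so the full column permutation has sign $\sgn(\pi')^m = \sgn(\pi)^m$, giving ${\det}_R M' = \sgn(\pi)^m\, {\det}_R M$. On the noncommutative side, substituting $M'_{i,\sigma(i)} = M_{i,\pi'(\sigma(i))}$ into Definition~\ref{def:det} and reindexing by $\tau = \pi'\sigma$ (which preserves the row ordering inside each product), I would obtain
\begin{equation*}
{\Det}_S M' = \sum_{\sigma \in S_n} \sgn(\sigma)\, M_{1,\pi'(\sigma(1))} \cdots M_{n,\pi'(\sigma(n))} = \sgn(\pi)\, {\Det}_S M.
\end{equation*}
Since ${\Det}_S M$ is $m \times m$ over $R$ and $\sgn(\pi) = \pm 1$, applying ${\det}_R$ yields ${\det}_R({\Det}_S M') = \sgn(\pi)^m\, {\det}_R({\Det}_S M)$.

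Combining these, if $G$ is an SCC then ${\det}_R({\Det}_S M') = {\det}_R M'$ for every $M'$ satisfying $G$, and cancelling the common nonzero factor $\sgn(\pi)^m \in \{\pm 1\}$ gives ${\det}_R({\Det}_S M) = {\det}_R M$, proving $G_{\col,\pi}$ is an SCC. The converse direction follows by applying this same argument with $(G_{\col,\pi},\pi^{-1})$ in place of $(G,\pi)$, using $(G_{\col,\pi})_{\col,\pi^{-1}} = G$. The one subtlety requiring care is the sign bookkeeping: the $mn \times mn$ column permutation contributes the $m$-th power of $\sgn(\pi)$ because each block-swap decomposes into $m$ single-column swaps, whereas the reindexing inside ${\Det}_S$ contributes only one factor of $\sgn(\pi)$; that single factor is raised to the $m$-th power precisely by the subsequent outer ${\det}_R$, which is why the two sides pick up identical signs and the equivalence goes through cleanly.
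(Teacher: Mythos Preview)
Your proof is correct and follows essentially the same approach as the paper's: permute block columns, track the sign of both ${\det}_R$ and ${\Det}_S$ under that permutation, and cancel the common factor $\sgn(\pi)^m$. The only cosmetic difference is that the paper first reduces to the case where $\pi$ is an adjacent transposition (so all signs are just $-1$), whereas you handle a general $\pi$ in one shot via the reindexing $\tau = \pi'\sigma$; your version is slightly more direct but the underlying idea is identical.
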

In other words, if we permute groups of vertices in an SCC that correspond to columns
in a matrix, it will still be an SCC. (Unfortunately, permuting columns of $\mathcal{F}$
still results in $\mathcal{F}$.) Since rows and columns are not interchangeable in
noncommutative determinants, this lemma does not work for permutations of rows.

\begin{proof}
Let $G$ be an SCC of size $n$. It suffices to prove the theorem
where $\pi$ is an \emph{adjacent transposition}, since adjacent transpositions generate $S_n$.
Let $\pi = (k \text{ } k+1)$. Let $H := G_{\col,\pi}$. We need to prove that $H$ is an SCC.

Let $R$ be a commutative ring, let $m$ be a positive integer, let $S \subset M_{m}(R)$ be a
ring, and let $M \in M_{n}(S) \subset M_{mn}(R)$ be a matrix satisfying $H$.
We need to prove that
\begin{equation} \label{eq:col1}
{\det}_{R}({\Det}_{S}M) = {\det}_{R}M.
\end{equation}
Let $M'$ be the matrix $M$ with columns $k$ and $k+1$ swapped when viewed
as a matrix over $S$. Since $M'$ satisfies $G$, we have
\begin{equation} \label{eq:col2}
{\det}_{R}({\Det}_{S}M') = {\det}_{R}M'.
\end{equation}
We claim that
\begin{equation} \label{eq:colhelp}
{\Det}_{S}M' = -{\Det}_{S}M.
\end{equation}
Since the equation is true in a commutative ring, and indeterminates in each product are ordered
according to their row numbers, the two sides are equal. By \eqref{eq:colhelp},
\begin{equation} \label{eq:col3}
{\det}_{R}({\Det}_{S}M') = (-1)^m{\det}_{R}({\Det}_{S}M).
\end{equation}
On the other hand, if we view $M$ as a matrix over $R$,
it takes $m$ column swappings to transform $M$ into $M'$. Therefore,
\begin{equation} \label{eq:col4}
{\det}_{R}M' = (-1)^m{\det}_{R}M.
\end{equation}
Combining \eqref{eq:col2}, \eqref{eq:col3}, and \eqref{eq:col4}, we obtain
\eqref{eq:col1}, as desired.
\end{proof}

\subsection{The Transpose Lemma}

\begin{definition}
Let $n$ be a positive integer. Let $G$ and $H$ be commutativity conditions of size $n$.
The \emph{edge-union} of $G$ and $H$, denoted by $G \cup H$, is the graph on $V_n$ such that
$E(G \cup H) = E(G) \cup E(H)$, where $E(G)$ denotes the set of edges of $G$.
\end{definition}

\begin{definition}
Let $\mathcal{C}$ and $\mathcal{D}$ be collections of commutativity conditions.
Define the collection
$$\mathcal{C}+\mathcal{D} := \{G \cup H : G \in \mathcal{C}, H \in \mathcal{D}, \size(G)=\size(H) \}.$$
\end{definition}

We now introduce collections of commutativity conditions relevant to the
Transpose Lemma, though they are not by themselves collections of SCCs.

\begin{definition}
Let $c$ be a positive integer. For any $n \geq c$, let $T_{\col c, n}$ be the graph
on $V_n$ with edges $\{ (i,j),(k,l) \}$ for all $i,j,k,l$ such that:
\begin{itemize}
  \item $i \neq k$, $j \neq l$, $j \neq c$ and $l \neq c$; or
  \item ($j=c$ or $l=c$) and $(i-k)(j-l)>0$
\end{itemize}
Let $\mathcal{T}_{\col c} := \{T_{\col c, n} : n \geq c\}$.
\end{definition}
The meaning of $\mathcal{T}_{\col c}$ is the following. A block matrix $M$
satisfies $\mathcal{T}_{\col c}$ if
\begin{itemize}
  \item all pairs of its blocks that are not in column $c$ and are in different rows and
  different columns commute; and
  \item every block in column $c$ commutes with all blocks \emph{strictly above and to the left}
  or \emph{strictly below and to the right} of it.
\end{itemize}

For each positive integer $r$, define $\mathcal{T}_{\row r} := \mathcal{T}_{\col r}^t$.
The meaning of $\mathcal{T}_{\row r}$ is analogous to that of $\mathcal{T}_{\col c}$ above,
except that ``column $c$'' is replaced by ``row $r$.''

Let $\mathbb{N}$ denote the set of positive integers.
Define the collection
$$\mathcal{T} := \bigcup_{c \in \mathbb{N}} \mathcal{T}_{\col c} \cup
\bigcup_{r \in \mathbb{N}} \mathcal{T}_{\row r}.$$

\begin{lem} [Transpose Lemma]
Let $\mathcal{C}$ be a collection of SCCs. Then
$$\mathcal{C}^t  + \mathcal{T}$$
is also a collection of SCCs.
\end{lem}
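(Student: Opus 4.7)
My plan is to follow the proof strategy of Theorem~\ref{thm:f}, replacing Lemma~\ref{lem:f} with an analog tailored to matrices satisfying $G^t \cup H$ for $G$ an SCC in $\mathcal{C}$ and $H \in \mathcal{T}$. The Bourbaki framework still uses row~1 and column~1 as the pivot of the cofactor-expansion argument; the extra commutativity from $H$ is meant to supply precisely what is needed to make the cofactor identity go through in the sparser noncommutative setting, where the $G^t$ edges alone (unlike the $\mathcal{F}$ edges used for Theorem~\ref{thm:f}) do not suffice.

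The central step is to prove a new cofactor identity: for $X \in M_k(A)$ satisfying $G^t \cup H$, the product of $X$ with the first column of its transposed noncommutative cofactor matrix should equal $(\Det_A X, 0, \ldots, 0)^t$, exactly as in Lemma~\ref{lem:f}. The topmost entry will still equal $\Det_A X$ by the same row-ordering argument as in Lemma~\ref{lem:f}. For the vanishing of the remaining entries, the original proof used that all non-first-row entries of $X$ commute pairwise; we can no longer do so. Instead we must reorder each monomial in the cofactor expansion using only the $G^t$ edges together with the $H$ edges. For $H = T_{\col c, n}$, the $H$ edges provide full pairwise commutativity among non-column-$c$ entries, plus the ``strictly above-left or below-right'' commutativity for column-$c$ entries; together with the $G^t$ edges this should allow the same monomial-by-monomial reordering used in the commutative identity, after which the sum cancels in sign-pairs over permutations. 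The case $H = T_{\row r, n}$ is treated analogously with ``column~$c$'' replaced by ``row~$r$.''

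Once this lemma is in hand, the induction framework of Section~\ref{sect:review} carries over essentially verbatim: form $N = M + zI$, build $U$ from the first column of $\Cof_{S[z]}N$, apply the new lemma to identify the first column of $NU$, take $\det_{R[z]}$, apply the inductive hypothesis to the $(n-1)\times(n-1)$ submatrix $Q$---after verifying that $Q$ inherits a condition in $\mathcal{C}^t + \mathcal{T}$---and cancel the non-zero-divisor $\det_{R[z]}Q$. The main obstacle, and the technical heart of the proof, is the combinatorial bookkeeping for the new cofactor identity: one must verify that with the restricted commutativity of $G^t \cup H$, every monomial in the cofactor expansion still admits the reordering needed to reduce it to the commutative case. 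The specific asymmetric ``above-left or below-right'' pattern in the definition of $\mathcal{T}$ is, I expect, engineered precisely so that such a reordering is always available.
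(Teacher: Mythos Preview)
Your approach has a genuine gap, and it differs fundamentally from the paper's much shorter argument.

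The paper does \emph{not} rerun the Bourbaki induction. Instead it uses the hypothesis that $\mathcal{C}$ is already a collection of SCCs directly: if $M$ satisfies $\mathcal{C}^t$, then $M^t$ (the transpose over $R$) satisfies $\mathcal{C}$, so $\det_R(\Det_S M^t) = \det_R M^t = \det_R M$ is immediate. The only work is then to show $(\Det_S M^t)^t = \Det_S M$, and \emph{this} is what the commutativity from $\mathcal{T}$ is engineered to accomplish: the quadrant argument relative to the column-$c$ (or row-$r$) entry lets one reorder each monomial from column-order to row-order. No cofactor identity and no induction are needed.

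Your plan, by contrast, never actually uses that $\mathcal{C}$ consists of SCCs. You try to prove the cofactor identity \eqref{eq:cof2} for $X$ satisfying $G^t \cup H$, but $G$ is an \emph{arbitrary} SCC, so $G^t$ has no usable structure; the edges of $G^t$ give you nothing concrete to reorder with. Worse, your inductive step requires that the $(n-1)\times(n-1)$ submatrix $Q$ again satisfy a condition in $\mathcal{C}^t + \mathcal{T}$, but $\mathcal{C}$ is a fixed, arbitrary collection---it need not contain any condition of size $n-1$ at all, and even if it does, there is no reason the restriction of $G^t$ lands in $\mathcal{C}^t$. So the induction does not close. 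The fix is not to patch the induction but to abandon it: apply the SCC hypothesis to $M^t$ and use $\mathcal{T}$ only to match the two noncommutative determinants.
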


In other words, if an SCC is transposed, then combined with a member of $\mathcal{T}$,
the result will be an SCC.
The transpose of an SCC by itself is not necessarily an SCC
because rows and columns are not interchangeable.

\begin{proof}
Let $R$ be a commutative ring, let $S \subset M_{m}(R)$ be a
ring, and let $M \in M_{n}(S) \subset M_{mn}(R)$ be a matrix satisfying $\mathcal{C}^t  + \mathcal{T}$.
Assume that $M$ satisfies $\mathcal{C}^t  + \mathcal{T}_{\col c}$ for some $c$. The proof
of the case where $M$ satisfies $\mathcal{C}^t  + \mathcal{T}_{\row r}$ for some $r$ is analogous.
We need to show that
\begin{equation} \label{eq:tran1}
{\det}_{R}({\Det}_{S}M) = {\det}_{R}M.
\end{equation}
Let $M^t$ be the transpose of $M$ over $R$.
Since $M$ satisfies $\mathcal{C}^t$, $M^t$ satisfies $\mathcal{C}$,
because the $(i,j)$ block of $M^t$ is $(M_{ji})^t$ and transposes of commuting matrices commute. Thus
\begin{equation} \label{eq:tran1.5}
{\det}_{R}({\Det}_{S}M^t) = {\det}_{R}M^t = {\det}_{R}M.
\end{equation}
We claim that
\begin{equation} \label{eq:tran2}
({\Det}_{S}M^t)^t = {\Det}_{S}M.
\end{equation}
If equation \eqref{eq:tran2} is true, then
${\det}_{R}({\Det}_{S}M^t) = {\det}_{R}({\Det}_{S}M)$,
which, together with \eqref{eq:tran1.5}, implies \eqref{eq:tran1}.

We now prove \eqref{eq:tran2}.
Since the $(i,j)$ block of $M^t$ is $(M_{ji})^t$, equation \eqref{eq:tran2} can be expanded into
\begin{equation} \label{eq:tran3}
\sum_{\pi \in S_{n}} \sgn(\pi) M_{\pi(n),n}M_{\pi(n-1),n-1}\cdots M_{\pi(1),1} =
\sum_{\pi \in S_{n}} \sgn(\pi) M_{1,\pi(1)}M_{2,\pi(2)}\cdots M_{n,\pi(n)}.
\end{equation}
Equation \eqref{eq:tran3} is true in a commutative ring. We show that
indeterminates in each product can be rearranged so that their orders match.
Let $P$ be a monomial on the left-hand side of the equation. Let $M_{rc}$ be the matrix entry in column $c$
that appears in $P$. Partition the set of indeterminates in $P$ into four quadrants with respect to
$M_{rc}$, and let $A,B,C,D$ be the set of indeterminates in $P$ in the top-left, top-right, bottom-left,
and bottom-right quadrants respectively.
By the first half of the definition of $\mathcal{T}_{\col c}$, any two indeterminates
in $A \cup B \cup C \cup D$ commute.
Therefore,
$$P = \left( \prod_{X \in B}X \prod_{X \in D}X \right) M_{rc}
\left( \prod_{X \in A}X \prod_{X \in C}X \right).$$
By the second half of the definition of $\mathcal{T}_{\col c}$, $M_{rc}$
commutes with all indeterminates in $A$ and $D$. Hence
$$P = \left( \prod_{X \in B}X \prod_{X \in A}X \right) M_{rc}
\left( \prod_{X \in D}X \prod_{X \in C}X \right).$$
Since indeterminates in $A \cup B \cup C \cup D$ commute, indeterminates
in $P$ can be rearranged according to their
row numbers. This is in accordance with the order on the right-hand side.
\end{proof}

\subsection{The $\mathcal{F}_\text{side}$ and $\mathcal{F}_\text{down}$ Collections}

We now apply the Column Permuting Lemma and the Transpose Lemma to derive from $\mathcal{F}$
two more collections of commutativity conditions.
Throughout, let $\mathbb{N}$ denote the set of positive integers.

\begin{definition}
Let $j \geq 1$. Let $(1 \text{ } j) \in S_j$ be the transposition of numbers $1$ and $j$
(interpret $(1 \text{ } 1)$ as the identity permutation). For any $n \geq j$, let
$$F_{n,\text{side},j} := (F_n^t \cup T_{\col 1,n})_{\col, (1 \text{ } j)}.$$
Then let
$$\mathcal{F}_{\text{side},j} := (\mathcal{F}^t + \mathcal{T}_{\col 1})_{\col, (1 \text{ } j)} \
= \{ F_{n,\text{side},j} : n \geq j \}.$$
\end{definition}

A block matrix $M$ satisfies $\mathcal{F}_{\text{side},j}$ if

\begin{itemize}
  \item all pairs of blocks that are not in column $j$ and are in different rows commute; and
  \item every block in column $j$ commutes with all blocks that are not in column $j$ and are
  \emph{strictly below} it.
\end{itemize}

For example, 
$M=
\begin{pmatrix} 
A & B \\
C & D
\end{pmatrix}$
satisfies $\mathcal{F}_{\text{side},1}$ if $BD=DB$ and $AD=DA$,
and $M$ satisfies $\mathcal{F}_{\text{side},2}$ if $AC=CA$ and $BC=CB$.

Let
$$\mathcal{F}_{\text{side}} := \bigcup_{j \in \mathbb{N}} \mathcal{F}_{\text{side},j}.$$

\begin{definition}
Let $i \geq 1$. For any $n \geq i$, let
$$F_{n,\text{down},i} := F_{n,\text{side},i}^t \cup T_{\row i, n}.$$
Then let
$$\mathcal{F}_{\text{down},i} := \mathcal{F}_{\text{side},i}^t + \mathcal{T}_{\row i} \
= \{ F_{n,\text{down},i} : n \geq i \}.$$
\end{definition}

A matrix $M$ satisfies $\mathcal{F}_{\text{down},i}$ if

\begin{itemize}
  \item all pairs of blocks that are not in row $i$ and are in different columns commute; and
  \item every block in row $i$ commutes with all blocks that are not in row $i$ nor its own column and
  are \emph{strictly above} or \emph{strictly to the right} of it.
\end{itemize}

For example, 
$M=
\begin{pmatrix} 
A & B \\
C & D
\end{pmatrix}$
satisfies $\mathcal{F}_{\text{down},2}$ if $AB=BA$, $BC=CB$ and $AD=DA$.

Let
$$\mathcal{F}_{\text{down}} := \bigcup_{i \in \mathbb{N}} \mathcal{F}_{\text{down},i}.$$

\begin{thm}[Sufficiency of $\mathcal{F}_{\text{side}}$ and $\mathcal{F}_{\text{down}}$]
The collections $\mathcal{F}_{\text{side}}$ and $\mathcal{F}_{\text{down}}$ are collections
of sufficient commutativity conditions.
\end{thm}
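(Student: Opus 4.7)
The plan is to chain together Theorem \ref{thm:f}, the Column Permuting Lemma, and the Transpose Lemma, since the definitions of $\mathcal{F}_{\text{side},j}$ and $\mathcal{F}_{\text{down},i}$ are built from $\mathcal{F}$ using exactly those three operations. First, I would record two easy closure properties of the class of collections of SCCs. If $\mathcal{C}' \subseteq \mathcal{C}$ and $\mathcal{C}$ consists of SCCs, then any $M$ satisfying $\mathcal{C}'$ automatically satisfies $\mathcal{C}$, so $\mathcal{C}'$ also consists of SCCs; and an arbitrary union of collections of SCCs is again a collection of SCCs by the same argument. I would also note that the ``$+$'' operation is monotone: $\mathcal{D}' \subseteq \mathcal{D}$ implies $\mathcal{C} + \mathcal{D}' \subseteq \mathcal{C} + \mathcal{D}$.

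With these in hand, for $\mathcal{F}_{\text{side}}$ I would start from Theorem \ref{thm:f}, which says that $\mathcal{F}$ is a collection of SCCs. The Transpose Lemma then gives that $\mathcal{F}^t + \mathcal{T}$ is a collection of SCCs. Since $\mathcal{T}_{\col 1} \subseteq \mathcal{T}$ by the very definition of $\mathcal{T}$, monotonicity of $+$ yields $\mathcal{F}^t + \mathcal{T}_{\col 1} \subseteq \mathcal{F}^t + \mathcal{T}$, so $\mathcal{F}^t + \mathcal{T}_{\col 1}$ is still a collection of SCCs. Applying the Column Permuting Lemma to each of its members with $\pi = (1 \text{ } j) \in S_j$ gives that $\mathcal{F}_{\text{side},j} = (\mathcal{F}^t + \mathcal{T}_{\col 1})_{\col,(1 \text{ } j)}$ is a collection of SCCs for every $j \geq 1$. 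Taking the union over $j$ then finishes $\mathcal{F}_{\text{side}}$.

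For $\mathcal{F}_{\text{down}}$ I would iterate this one level deeper. Fix $i \geq 1$. Since $\mathcal{F}_{\text{side},i}$ has just been shown to be a collection of SCCs, the Transpose Lemma gives that $\mathcal{F}_{\text{side},i}^t + \mathcal{T}$ is a collection of SCCs. Monotonicity together with $\mathcal{T}_{\row i} \subseteq \mathcal{T}$ then exhibits $\mathcal{F}_{\text{down},i} = \mathcal{F}_{\text{side},i}^t + \mathcal{T}_{\row i}$ as a sub-collection, so it too is a collection of SCCs. Unioning over $i$ gives $\mathcal{F}_{\text{down}}$.

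There is really no substantive obstacle beyond careful bookkeeping; the entire argument is a formal manipulation built on the infrastructure of this section. The one spot I would pause at is the sub-collection step, where I replace the large collection $\mathcal{T}$ with the smaller $\mathcal{T}_{\col 1}$ or $\mathcal{T}_{\row i}$ --- I want to be sure this operation preserves sufficiency. But that is immediate from the trivial observation that shrinking a collection only makes it \emph{harder} for a matrix to satisfy it, so sufficiency is preserved.
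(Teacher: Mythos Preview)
Your proposal is correct and is precisely the argument the paper has in mind: the paper's own proof is the single sentence ``We apply the Column Permuting Lemma and the Transpose Lemma,'' and you have carefully unpacked exactly that chain of applications (Theorem~\ref{thm:f} $\Rightarrow$ Transpose Lemma $\Rightarrow$ restrict to $\mathcal{T}_{\col 1}$ $\Rightarrow$ Column Permuting Lemma for $\mathcal{F}_{\text{side},j}$, then iterate once more for $\mathcal{F}_{\text{down},i}$). Your explicit justification of the sub-collection and monotonicity-of-$+$ steps is a welcome addition and entirely in line with the intended reasoning.
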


\begin{proof}
We apply the Column Permuting Lemma and the Transpose Lemma.
\end{proof}

\section{Classification of SCCs of Size 2}
\label{sect:classify}

In this section, we classify all SCCs of size 2. For convenience,
let the notation
$$ A := (1,1), \; B := (1,2), \; C := (2,1), \; D:= (2,2)$$
denote the four members of $V_2$. The corresponding positions of these letters
in a matrix are:
$\begin{pmatrix}
A & B \\
C & D
\end{pmatrix}$.
Let $G_1$, $G_2$, $G_3$, $G_4$ and $G_5$ be graphs on $V_2$
with the following sets of edges:
\begin{align*}
E(G_1) &:= \{ CD \} \\
E(G_2) &:= \{ AD, BD  \} \\
E(G_3) &:= \{ AC, BC  \} \\
E(G_4) &:= \{ AB, AD, BC \} \\
E(G_5) &:= \{ AB, AC, BD \}.
\end{align*}

\begin{thm} [Classification of SCCs of Size 2] \label{thm:classify}
A commutativity condition $G$ of size 2 is an SCC if and only if it
contains at least one of $G_1$, $G_2$, $G_3$, $G_4$ and $G_5$ as a subgraph. 
\end{thm}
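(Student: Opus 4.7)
The plan is to split the argument into the two directions. For \emph{sufficiency}, I show each of $G_1, \ldots, G_5$ is itself an SCC, so that any graph containing one of them is an SCC by definition. For \emph{necessity}, I enumerate the maximal commutativity conditions on $V_2$ containing none of the $G_i$ and produce a counterexample matrix for each; a counterexample for a maximal such graph is automatically a counterexample for every subgraph, so this handles all remaining graphs.

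The first four sufficiency cases follow immediately from the earlier results. Unpacking the definitions, one finds $G_1 = F_2$, $G_2 = F_{2,\text{side},1}$, $G_3 = F_{2,\text{side},2}$, and $G_4 = F_{2,\text{down},2}$, so Theorem~\ref{thm:f} together with the sufficiency theorem for $\mathcal{F}_{\text{side}}$ and $\mathcal{F}_{\text{down}}$ dispatches them. The remaining case $G_5 = \{AB, AC, BD\}$ does not arise from any collection built so far and needs a direct argument. Writing $M = \begin{pmatrix} A & B \\ C & D \end{pmatrix}$ under the hypotheses $AB = BA$, $AC = CA$, and $BD = DB$, I would proceed in two steps. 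First, the polynomial trick---replacing $D$ by $D + zI$ in $R[z]$, so $\det(D + zI)$ is a monic polynomial and hence not a zero-divisor---combined with block row reduction via $\begin{pmatrix} I & -BD^{-1} \\ 0 & I \end{pmatrix}$ gives ${\det}_R M = {\det}_R(DA - BC)$, using the identity $BD^{-1} = D^{-1}B$ (which follows from $BD = DB$) to collapse $A - BD^{-1}C$ into $A - D^{-1}BC$ and then absorb a factor of $D$. Second, the hypotheses $AB = BA$ and $AC = CA$ yield $ABC = BAC = BCA$, so $A$ commutes with $BC$, whence
$$A(DA - BC) = ADA - A(BC) = (AD)A - (BC)A = (AD - BC)A.$$
Taking $R$-determinants and cancelling ${\det}_R A$ (using the polynomial trick again, now replacing $A$ by $A + zI$) gives ${\det}_R(DA - BC) = {\det}_R(AD - BC)$. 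Combining the two steps proves ${\det}_R M = {\det}_R(\Det_S M)$.

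For necessity, a short combinatorial analysis---excluding the edge $CD$ and the pairs or triples that would instantiate $G_2, G_3, G_4, G_5$---shows that the maximal graphs on $V_2$ containing no $G_i$ are exactly $\{AB, AC, AD\}$, $\{AB, BC, BD\}$, $\{AC, AD\}$, $\{BC, BD\}$, $\{AD, BC\}$, and $\{AC, BD\}$. The Column Permuting Lemma applied to the transposition of columns $1$ and $2$ identifies $\{AB, AC, AD\} \leftrightarrow \{AB, BC, BD\}$ and $\{AC, AD\} \leftrightarrow \{BC, BD\}$, cutting the list to four orbits; moreover $\{AC, AD\} \subset \{AB, AC, AD\}$, so a counterexample for the larger graph automatically handles the smaller. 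It therefore suffices to exhibit three explicit matrix counterexamples, one each for $\{AB, AC, AD\}$, $\{AD, BC\}$, and $\{AC, BD\}$. For example, taking $A = I_2$, $B = \begin{pmatrix} 0 & 1 \\ 0 & 0 \end{pmatrix}$, $C = \begin{pmatrix} 0 & 0 \\ 1 & 0 \end{pmatrix}$, $D = \begin{pmatrix} 1 & 0 \\ 0 & 0 \end{pmatrix}$ over $\mathbb{Z}$ yields a matrix satisfying $\{AB, AC, AD\}$ with ${\det}_R M = -1$ but ${\det}_R(AD - BC) = 0$; analogous small $2 \times 2$ examples dispatch the other two representatives.

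The principal obstacle is the sufficiency of $G_5$: this case lies outside the $\mathcal{F}$, $\mathcal{F}_{\text{side}}$, $\mathcal{F}_{\text{down}}$ framework, and the key nontrivial observation is the commutator identity $A(DA - BC) = (AD - BC)A$, which bridges the two ``directions'' of the $2 \times 2$ noncommutative determinant. The remaining work---the other sufficiency cases and the necessity counterexamples---is routine given the lemmas already established.
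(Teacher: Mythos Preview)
Your overall architecture matches the paper's exactly: $G_1$--$G_4$ come from $\mathcal{F}$, $\mathcal{F}_{\text{side}}$, $\mathcal{F}_{\text{down}}$; $G_5$ needs a direct argument; and necessity proceeds by listing the maximal obstruction graphs and exhibiting counterexamples. The necessity side differs only cosmetically: the paper's maximal list is $H_1=\{AD,BC\}$, $H_2=\{AB,AC,AD\}$, $H_3=\{AB,BC,BD\}$, $H_4=\{AC,BD\}$, so your $\{AC,AD\}$ and $\{BC,BD\}$ are not actually maximal (as you yourself note via the containment $\{AC,AD\}\subset\{AB,AC,AD\}$); this is a harmless slip in terminology. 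Your explicit counterexample for $H_2$ is correct; the paper supplies concrete matrices for all three representatives rather than leaving two to the reader.

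The genuine difference is in the $G_5$ argument. The paper quotes Silvester's identity ${\det}_R M = {\det}_R(DA'-CB')$ from $A'B'=B'A'$, substitutes $A'=A+zI$ and $B'=B+zI$ simultaneously, multiplies by ${\det}_R(A'B')$, and uses the commutativity hypotheses to push $A'B'$ across to the right, obtaining ${\det}_R(A'D-B'C)$ after cancellation. You instead start from ${\det}_R M = {\det}_R(DA-BC)$ (Silvester's case $BD=DB$) and then use the clean commutator identity $A(DA-BC)=(AD-BC)A$, which follows immediately from $A$ commuting with $BC$. Your route is arguably more transparent: it isolates the single algebraic fact that moves $A$ past the determinant expression, whereas the paper's manipulation is a slightly heavier symmetric computation. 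One caution: your Step~1 as written invokes $D^{-1}$, which need not exist in $M_m(R[z])$ even after replacing $D$ by $D+zI$; the correct execution multiplies on the left by $\begin{pmatrix} D' & -B \\ 0 & I\end{pmatrix}$ (or uses the adjugate) and then cancels the monic non-zero-divisor ${\det}_{R[z]}D'$. With that standard fix, your argument goes through.
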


Since $G_1 = F_2$, $G_2 = F_{2,\text{side},1}$, $G_3 = F_{2,\text{side},2}$ and
$G_4 = F_{2,\text{down},2}$,
the results of the previous section imply that they are SCCs.
We now prove that $G_5$ is an SCC. We will use the following result from
Silvester (\cite{silvester}, p. 4):
\begin{prop} \label{prop:silvester}
Let $R$ be a commutative ring and let $A,B,C,D$ be matrices in $M_m(R)$.
Let $M=
\begin{pmatrix}
A & B \\
C & D
\end{pmatrix}$. Then,
\begin{enumerate}[label=(\alph*)]
  \item If $AC=CA$, then ${\det}_{R}M = {\det}_{R}(AD-CB)$;
  \item If $BD=DB$, then ${\det}_{R}M = {\det}_{R}(DA-BC)$;
  \item If $AB=BA$, then ${\det}_{R}M = {\det}_{R}(DA-CB)$.
\end{enumerate}
\end{prop}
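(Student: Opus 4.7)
The plan is to establish part (a) directly by a polynomial perturbation that makes the top-left block invertible, and then to derive parts (b) and (c) from (a) by exploiting the row/column swap and transpose symmetries of a $2 \times 2$ block matrix.

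For (a), I would pass to the polynomial ring $R[z]$ and set $A' := A + zI_m \in M_m(R[z])$. The determinant ${\det}_{R[z]} A'$ is a monic polynomial of degree $m$ in $z$, hence a non-zero-divisor in $R[z]$, so the natural map into the localization $\widetilde R := R[z][({\det}_{R[z]} A')^{-1}]$ is injective. Over $\widetilde R$, where $A'$ is invertible, one has the block LU factorization
\[
\begin{pmatrix} A' & B \\ C & D \end{pmatrix}
= \begin{pmatrix} A' & 0 \\ C & I_m \end{pmatrix}
\begin{pmatrix} I_m & (A')^{-1}B \\ 0 & D - C(A')^{-1}B \end{pmatrix}.
\]
Taking determinants over the commutative ring $\widetilde R$, the block-triangular formula and multiplicativity give
\[
{\det}_{\widetilde R} \begin{pmatrix} A' & B \\ C & D \end{pmatrix} = {\det}_{\widetilde R}(A') \cdot {\det}_{\widetilde R}(D - C(A')^{-1}B) = {\det}_{\widetilde R}(A'D - A'C(A')^{-1}B).
\]
Since $zI_m$ is central and $AC = CA$, we have $A'C = (A+zI_m)C = C(A+zI_m) = CA'$, so $A'C(A')^{-1} = C$, and the right-hand side simplifies to ${\det}_{\widetilde R}((A+zI_m)D - CB)$. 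Both sides of the resulting identity are polynomials in $z$ with coefficients in $R$, and since $R[z] \hookrightarrow \widetilde R$, the identity in fact holds in $R[z]$; setting $z = 0$ yields ${\det}_R M = {\det}_R(AD - CB)$.

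For (b), I would swap the two block-rows and the two block-columns of $M$ to obtain $M' := \begin{pmatrix} D & C \\ B & A \end{pmatrix}$; each operation multiplies the ordinary determinant by $(-1)^m$, so ${\det}_R M' = {\det}_R M$. The $(1,1)$ and $(2,1)$ blocks of $M'$ are $D$ and $B$, which commute by hypothesis, so part (a) applied to $M'$ gives ${\det}_R M = {\det}_R(DA - BC)$. For (c), the ordinary transpose $M^t = \begin{pmatrix} A^t & C^t \\ B^t & D^t \end{pmatrix}$ satisfies ${\det}_R M^t = {\det}_R M$, and its $(1,1)$ and $(2,1)$ blocks $A^t$ and $B^t$ commute because $A^t B^t = (BA)^t = (AB)^t = B^t A^t$; applying (a) to $M^t$ gives ${\det}_R M = {\det}_R(A^t D^t - B^t C^t) = {\det}_R((DA - CB)^t) = {\det}_R(DA - CB)$. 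The main obstacle is the descent step in (a): one must verify that the identity obtained over the localization $\widetilde R$ actually lives in $R[z]$, which hinges on ${\det}_{R[z]}(A + zI_m)$ being monic in $z$ and hence a non-zero-divisor; every other step reduces to routine block-matrix computations.
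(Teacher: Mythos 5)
The paper itself offers no proof of this proposition: it is imported verbatim from Silvester's article (cited as \cite{silvester}, p.~4), so there is no internal argument to compare against. Your proof is correct and self-contained. In part (a), ${\det}_{R[z]}(A+zI_m)$ is indeed monic of degree $m$, hence a non-zero-divisor, so the localization map $R[z]\to R[z][({\det}_{R[z]}A')^{-1}]$ is injective; the block LU factorization, the identity ${\det}(A'){\det}(D-C(A')^{-1}B)={\det}(A'D-A'C(A')^{-1}B)$, and the simplification $A'C(A')^{-1}=C$ from $A'C=CA'$ are all in order, and the descent to $R[z]$ followed by $z=0$ is valid. The reductions of (b) and (c) to (a) are also sound: swapping the two block rows and the two block columns contributes $(-1)^{m^2}\cdot(-1)^{m^2}=1$, the $(1,1)$ and $(2,1)$ blocks of the resulting matrix (resp.\ of $M^t$) commute by hypothesis (for (c) because $A^tB^t=(BA)^t=(AB)^t=B^tA^t$), and ${\det}_R\bigl((DA-CB)^t\bigr)={\det}_R(DA-CB)$. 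Your route differs only mildly from the standard one (Silvester's, and the device used elsewhere in this paper, e.g.\ in the proofs of Theorem \ref{thm:bourbaki} and Lemma \ref{lem:e}): there one stays in $R[z]$, left-multiplies by a matrix such as $\begin{pmatrix} I_m & 0 \\ -C & A' \end{pmatrix}$ to produce the block-triangular matrix $\begin{pmatrix} A' & B \\ 0 & A'D-CB \end{pmatrix}$, and cancels the non-zero-divisor ${\det}_{R[z]}A'$ directly, whereas you invert it in a localization and then descend by injectivity; the two devices are interchangeable here, and your extra step (checking injectivity of the localization) is exactly the point you correctly flag as the crux.
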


Proposition \ref{prop:silvester} also implies
that $G_2$, $G_3$ and $G_4$ are SCCs. For example, for $G_2$,
since $BD=DB$, we have ${\det}_{R}M = {\det}_{R}(DA-BC)$, which is equal
to ${\det}_{R}(AD-BC)$ since $AD=DA$.

\begin{lem} \label{lem:e}
The graph $G_5$ is an SCC.
\end{lem}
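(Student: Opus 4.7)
The plan is to combine Proposition~\ref{prop:silvester}(b) with a conjugation-by-$A$ identity, handling the possible non-invertibility of $A$ via the polynomial ring trick used in Bourbaki's proof of Theorem~\ref{thm:bourbaki}.

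First I would apply Proposition~\ref{prop:silvester}(b), which uses only the edge $BD$ of $G_5$, to obtain $\det_R M = \det_R(DA - BC)$. It therefore suffices to prove $\det_R(DA - BC) = \det_R(AD - BC)$.

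The key observation is the matrix identity $A(DA - BC) = (AD - BC)A$. Expanding both sides, this reduces to $ABC = BCA$, which follows from the other two edges $AB$ and $AC$ of $G_5$ via $ABC = (AB)C = (BA)C = B(AC) = B(CA) = BCA$. Taking $\det_R$ of both sides of the identity yields $(\det_R A) \cdot [\det_R(DA - BC) - \det_R(AD - BC)] = 0$ in $R$.

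To cancel $\det_R A$, I would mimic Bourbaki's polynomial trick: work in $R[z]$, replace $A$ by $A' := A + zI_m$, and form $M' \in M_2(S[z])$ by substituting $A'$ for $A$ in $M$. All three commutativity relations of $G_5$ are preserved, since $zI_m$ is central, so $M'$ satisfies $G_5$ over $S[z]$. Repeating the preceding paragraph with $A', M'$ in place of $A, M$, the factor to cancel becomes $\det_{R[z]}(A + zI_m)$, a monic polynomial in $z$ of degree $m$, hence a non-zero-divisor in $R[z]$. Cancelling and combining with the instance of Proposition~\ref{prop:silvester}(b) that gives $\det_{R[z]} M' = \det_{R[z]}(DA' - BC)$, I obtain $\det_{R[z]} M' = \det_{R[z]}(A'D - BC)$. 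Setting $z = 0$ then yields $\det_R M = \det_R(AD - BC)$, as desired. The only step that might look ad hoc in advance is spotting the conjugation identity $A(DA - BC) = (AD - BC)A$, which consumes exactly the two edges of $G_5$ not already used by Silvester; once it is in hand, the rest is the same polynomial-ring mechanism as in the proof of Theorem~\ref{thm:bourbaki}.
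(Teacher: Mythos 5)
Your proof is correct, and it runs on the same engine as the paper's: reduce to Silvester's $2\times 2$ result (Proposition~\ref{prop:silvester}), fix the discrepancy in the order of factors by a commutation identity combined with multiplicativity of the determinant over the commutative ring $R[z]$, and cancel a monic polynomial produced by perturbing along $zI_m$, exactly as in the proof of Theorem~\ref{thm:bourbaki}. The difference is in the instantiation: the paper invokes part (c) (using $AB=BA$) applied to $M'$ with both $A':=A+zI_m$ and $B':=B+zI_m$ perturbed, and passes from $\det_{R[z]}(DA'-CB')$ to $\det_{R[z]}(A'D-B'C)$ by multiplying through by $\det_{R[z]}(A'B')$ and commuting factors, so the monic polynomial cancelled has degree $2m$; you instead invoke part (b) (using $BD=DB$), perturb only $A$, and pass from $\det_{R[z]}(DA'-BC)$ to $\det_{R[z]}(A'D-BC)$ via the conjugation identity $A'(DA'-BC)=(A'D-BC)A'$, which consumes the edges $AB$ and $AC$ and requires cancelling only $\det_{R[z]}(A+zI_m)$, of degree $m$. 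Both arguments use all three edges of $G_5$ and are structurally parallel; yours is marginally leaner in that only one block needs the $z$-perturbation and the cancelled polynomial is smaller.
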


\begin{proof}
Let $R$ be a commutative ring, let $m$ be a positive integer, let $A,B,C,D$ be matrices in $M_m(R)$,
and let $M=
\begin{pmatrix}
A & B \\
C & D
\end{pmatrix}$ be a block matrix satisfying $G_5$. By definition,
$$AB=BA, AC=CA \text{ and } BD=DB.$$
We need to prove that
\begin{equation} \label{eq:e}
{\det}_{R}M = {\det}_{R}(AD-BC).
\end{equation}
Let $R[z]$ be the polynomial ring with variable $z$, and let $A',B' \in M_m(R[z])$
be the polynomials $A' := A+zI_m$ and $B' := B+zI_m$. Let $M': = 
\begin{pmatrix}
A' & B' \\
C & D
\end{pmatrix}$.
Then $M'$ also satisfies $G_5$. Since $A'B'=B'A'$, Proposition
\ref{prop:silvester}(c) implies
$${\det}_{R[z]}M' = {\det}_{R[z]}(DA'-CB').$$
Therefore,
\begin{align*}
{\det}_{R[z]}(A'B'){\det}_{R[z]}M' &= {\det}_{R[z]}(A'B'){\det}_{R[z]}(DA'-CB') \\
&= {\det}_{R[z]}(A'B'DA'-A'B'CB' )  \\
&= {\det}_{R[z]}(A'DA'B'-B'CA'B') \\
&\phantom{=} \text{(since $B'$ commutes with $A'$ and $D$,
and $A'$ commutes with $B'$ and $C$)}\\
&= {\det}_{R[z]}(A'D-B'C){\det}_{R[z]}(A'B').
\end{align*}
Now, since
${\det}_{R[z]}(A'B') = {\det}_{R[z]}(A'){\det}_{R[z]}(B')$ is a product of
two monic polynomials in $z$, it is not a divisor of zero in $R[z]$.
Cancelling gives ${\det}_{R[z]}M' = {\det}_{R[z]}(A'D-B'C)$. Setting $z=0$ then gives
\eqref{eq:e}.
\end{proof}

\begin{proof} [Proof of Theorem \ref{thm:classify}]
We have shown that $G_1$ -- $G_5$ are SCCs. It remains to show that
all graphs of size 2 that do not contain any of $G_1$ -- $G_5$ as a subgraph
are not SCCs. Let $G$ be such a graph.
Let $H_1$, $H_2$, $H_3$ and $H_4$ be graphs on $V_2$ with the following sets of edges:
\begin{align*}
E(H_1) &:= \{ AD, BC \} \\
E(H_2) &:= \{ AB, AC, AD \} \\
E(H_3) &:= \{ AB, BC, BD \} \\
E(H_4) &:= \{ AC, BD \}.
\end{align*}
We claim that $G$ is a subgraph of at least one of $H_1$ -- $H_4$.
If this is true, it will suffice to show that $H_1$ -- $H_4$ are not SCCs.

We sketch a proof of the claim above.
Since $G$ does not contain $G_1$ as a subgraph, the
edge $CD$ is not in $E(G)$. We divide into three cases
according to whether edges $AD$ and $BC$ are in $E(G)$.
\begin{itemize}
  \item Case 1: Both $AD$ and $BC$ are in $E(G)$. This forces $G = H_1$;
  \item Case 2: Exactly one of $AD$ and $BC$ is in $E(G)$. This forces $G \subset H_2$ or $G \subset H_3$;
  \item Case 3: Neither $AD$ nor $BC$ is in $E(G)$. Depending on which of the three remaining edges
  is missing, this forces $G \subset H_2$, $G \subset H_3$, or $G \subset H_4$.
\end{itemize}
We now show that $H_1$ -- $H_4$ are not SCCs.

Let $R=\mathbb{R}$, $S=M_2(R)$, and $T=M_3(R)$.
Let $A=
\begin{pmatrix}
1 & 2 \\
3 & 4
\end{pmatrix}$ and 
$B =
\begin{pmatrix}
5 & 6 \\
7 & 8
\end{pmatrix}$.
Let
$$C=
\begin{pmatrix}
1 & &\\
& 1 & \\
& & 2
\end{pmatrix},\;
D =
\begin{pmatrix}
1 & 2 \\
3 & 4 \\
& & 5
\end{pmatrix},\;
E =
\begin{pmatrix}
6 & 7 \\
8 & 9 \\
& & 10
\end{pmatrix} \text{and }
F = 
\begin{pmatrix}
1 & 1 \\
& 1 \\
& & 1
\end{pmatrix}.$$
Consider the matrices
$$M_1=
\begin{pmatrix}
A & B \\
B & A
\end{pmatrix}, \;
M_2=
\begin{pmatrix}
A & B \\
A & B
\end{pmatrix} \text{and }
M_3=
\begin{pmatrix}
C & D \\
E & F
\end{pmatrix}.$$
Then $M_1$, $M_2$ and $M_3$ satisfy $H_1$, $H_4$ and $H_2$, respectively.
However, by computation,
\begin{align*}
{\det}_{R}({\Det}_{S}M_1) = -128 &\neq {\det}_{R}M_1 = 0. \\
{\det}_{R}({\Det}_{S}M_2) = 128 &\neq {\det}_{R}M_2 = 0. \\
{\det}_{R}({\Det}_{T}M_3) =1152 &\neq {\det}_{R}M_3 = 1872.
\end{align*}
Thus $H_1$, $H_4$ and $H_2$ are not SCCs.
The case of $H_3$ follows from the case of $H_2$
by the Column Permuting Lemma.
\end{proof}

\section{Optimality of $\mathcal{F}$}
\label{sect:optimal}

Finally, we prove that $\mathcal{F}$ is optimal in a sense to be explained.
For each positive integer $n$, let $\kappa_n$ be the graph on $V_n$
such that there is an edge between vertices $(i,j)$ and $(k,l)$ if and only
if $i \neq 1$ and $k \neq 1$. In other words, $\kappa_n$ is the complete graph
on the vertex set
$\{ (i,j) : 2 \leq i \leq n, 1 \leq j \leq n\}.$

\begin{thm} [Optimality of $\mathcal{F}$] \label{thm:optimal}
Let $n$ be a positive integer and let $G$ be a subgraph of $\kappa_n$.
Then $G$ is an SCC if and only if $G$ contains $F_n$ as a subgraph.
\end{thm}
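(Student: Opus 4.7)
The plan is to prove both directions. The \emph{if} direction is immediate: if $F_n \subseteq G$ and $M$ satisfies $G$, then $M$ also satisfies $F_n$, so Theorem \ref{thm:f} gives ${\det}_R({\Det}_S M) = {\det}_R M$, showing that $G$ is an SCC. For the \emph{only if} direction I argue the contrapositive: given $G \subseteq \kappa_n$ with $F_n \not\subseteq G$, I construct a matrix $M$ over $S = M_2(\RR)$ (so $R = \RR$) witnessing that $G$ is not an SCC. Pick any edge $e = \{(i,j),(k,l)\} \in E(F_n) \setminus E(G)$, so $i, k \geq 2$ and $j \neq l$. Since the Column Permuting Lemma preserves SCC-ness and both $\kappa_n$ and $F_n$ are invariant under column permutations, we may column-permute to put $e$ in a convenient form. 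Throughout I take $P = \begin{pmatrix} 1 & 2 \\ 3 & 4 \end{pmatrix}$ and $Q = \begin{pmatrix} 5 & 6 \\ 7 & 8 \end{pmatrix}$ in $M_2(\RR)$; one computes $\det(PQ - QP) = 128$ and $\det(PQ + QP) = -112$.

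Case $i = k$: column-permute so $e = \{(i,1),(i,i)\}$, and define $M$ by $M_{1,1} = M_{i,1} = P$, $M_{1,i} = M_{i,i} = Q$, $M_{r,r} = I_2$ for $r \notin \{1,i\}$, and all other blocks zero. The nonzero entries of $M$ in rows $2$ through $n$ are only $P$, $Q$, and copies of $I_2$, and the only non-commuting pair is $(M_{i,1}, M_{i,i}) = (P, Q)$, so the commutativity graph of $M$ equals $\kappa_n \setminus \{e\} \supseteq G$. Block row $1$ equals block row $i$, so ${\det}_R M = 0$. Meanwhile, any permutation $\pi$ contributing to ${\Det}_S M$ must fix every row outside $\{1, i\}$ (because $M_{r,r} = I_2$ is the only nonzero block in row $r$ for such $r$), so only the identity and $(1\ i)$ contribute, giving ${\Det}_S M = PQ - QP$ and ${\det}_R({\Det}_S M) = 128 \neq 0$.

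Case $i \neq k$ (WLOG $i < k$): column-permute so $e = \{(i,i),(k,k)\}$, and define $M$ by $M_{1,i} = P$, $M_{1,k} = Q$, $M_{i,1} = M_{k,1} = I_2$, $M_{i,i} = P$, $M_{k,k} = Q$, $M_{r,r} = I_2$ for $r \notin \{1,i,k\}$, and all other blocks zero. The only non-commuting pair in rows $\geq 2$ is again $(M_{i,i}, M_{k,k}) = (P, Q)$, so the commutativity graph contains $\kappa_n \setminus \{e\}$. Since $M$ has zero blocks at every position linking $\{1, i, k\}$ with its complement and identity blocks on the diagonal of the complement, ${\det}_R M = {\det}_R M_0$, where
$$M_0 = \begin{pmatrix} 0 & P & Q \\ I_2 & P & 0 \\ I_2 & 0 & Q \end{pmatrix}$$
is the $3 \times 3$ block submatrix indexed by $\{1, i, k\}$. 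Block row-reduction brings $M_0$ to block-upper-triangular form with diagonal blocks $I_2$, $P$, and $2Q$, giving ${\det}_R M_0 = 2^2 \det P \det Q = 16$. Only two permutations contribute to ${\Det}_S M$ (those sending row $1$ into column $i$ or column $k$, each forcing a unique action on rows $i$ and $k$), yielding ${\Det}_S M = -PQ - QP$ and ${\det}_R({\Det}_S M) = -112 \neq 16$.

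The main subtleties are verifying that no additional non-commuting pairs sneak into the commutativity graph (which rests on the fact that $I_2$ and $0$ commute with everything) and executing the block-determinant reductions carefully. The core idea is to place the unique non-commuting pair $(P, Q)$ so that ${\det}_R M$ and ${\det}_R({\Det}_S M)$ isolate different quantities: in case $1$ a row duplication forces ${\det}_R M = 0$ while leaving the commutator $[P, Q]$ in ${\Det}_S M$, and in case $2$ the $3 \times 3$ embedding couples $P$ and $Q$ additively in ${\Det}_S M$ but multiplicatively in ${\det}_R M$.
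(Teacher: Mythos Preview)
Your argument is correct. The only imprecision is in Case~1, where you write ``the commutativity graph of $M$ equals $\kappa_n \setminus \{e\}$''; in fact it strictly contains $\kappa_n \setminus \{e\}$ (the zero and identity blocks in row~1 produce additional edges), but containment is all you need, and you state it that way in Case~2.

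Your route differs from the paper's in a useful way. The paper first reduces, via a row-swapping argument (Claim~\ref{claim:reduce}), to the two specific edges $\{(2,1),(2,2)\}$ and $\{(2,1),(3,2)\}$; that reduction requires checking that ${\Det}_S$ changes sign under the relevant row swap, which in turn needs the observation that the sole noncommuting pair either lies in one row or appears in the same order on both sides. The paper then disposes of each case with a one-parameter family, showing that one side depends on the parameter while the other does not. You instead normalize only the \emph{columns} via the Column Permuting Lemma and build a single explicit counterexample that works uniformly for all row positions $i$ (and $i<k$); the order-independence that the paper isolates in Claim~\ref{claim:reduce} is absorbed into your direct computation of ${\Det}_S M$, where the ordering $1<i<k$ makes the two surviving terms come out as $PQ$ and $QP$ regardless of the actual values of $i,k$. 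Your approach is shorter and avoids the auxiliary row-swap lemma; the paper's parametric trick, on the other hand, sidesteps any numerical determinant evaluation.
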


We divide the proof of Theorem \ref{thm:optimal} into three parts.
We first reduce the theorem to two cases, and then we prove the theorem
in each case separately.

\subsection{Reduction to Two Cases}

Let $G$ be a subgraph of $\kappa_n$ that does not contain $F_n$ as a subgraph.
We need to show that $G$ is not an SCC.
For any edge $AB$ of $F_n$, let $G_{AB}$ be the graph obtained from $\kappa_n$
by removing the edge $AB$.
Then $G$ is a subgraph of $G_{AB}$ for some $AB$.
Therefore, it suffices to prove that $G_{AB}$ is not an SCC for all edges $AB$.

\begin{claim} \hfill \label{claim:reduce}

(a) If $G_{AB}$ is an SCC where $A$ and $B$ are in the same row,
then $G_{(2,1),(2,2)}$ is an SCC.

(b) If $G_{AB}$ is an SCC where $A$ and $B$ are neither in the same row
nor in the same column, then $G_{(2,1),(3,2)}$ is an SCC.
\end{claim}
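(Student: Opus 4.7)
My plan is, in both parts of the claim, to apply the Column Permuting Lemma to normalize the columns containing $A$ and $B$ to be $1$ and $2$, and then to conjugate a test matrix by a row permutation $\tau \in S_{n}$ satisfying $\tau(1)=1$ that sends the target rows to the rows in which $A$ and $B$ actually sit. For any $M$ satisfying the target commutativity condition, the resulting row-permuted matrix $M'$ will satisfy the hypothesized SCC; matching sign factors in $\det_{R}$ and in $\Det_{S}$ will then bridge the gap between ${\det}_{R}({\Det}_{S}M)$ and ${\det}_{R}M$.

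For part (a), after column permutation I may assume $A=(i,1)$, $B=(i,2)$ with $i \geq 2$. If $i=2$ there is nothing to do; otherwise I let $\tau = (2\; i)$ and define $M'_{j,a} := M_{\tau(j),a}$, so that $M'$ satisfies $G_{(i,1),(i,2)}$ and hence ${\det}_{R}({\Det}_{S}M') = {\det}_{R}M'$. A single block-row swap corresponds to $m$ ordinary row swaps, giving ${\det}_{R}M' = (-1)^{m}{\det}_{R}M$. A change of summation variable in the defining sum for ${\Det}_{S}$ shows that each product in ${\Det}_{S}M'$ contains the same factors as the corresponding product in ${\Det}_{S}M$, but with the row-$2$ and row-$i$ factors at swapped positions. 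Since the unique non-commuting pair $\{M_{2,1},M_{2,2}\}$ of $G_{(2,1),(2,2)}$ lies entirely in row $2$, at most one of its members can occur in any given product; both displaced factors therefore commute with all of their neighbors, and the row order can be restored at no cost. This yields ${\Det}_{S}M' = -{\Det}_{S}M$, and the two sign factors cancel.

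Part (b) is more delicate. After column permutation $G_{(i,1),(k,2)}$ is an SCC, and swapping columns $1$ and $2$ shows $G_{(k,1),(i,2)}$ is also an SCC, so after possibly relabeling I may assume $i<k$. Taking $\tau$ to be any permutation with $\tau(1)=1$, $\tau(i)=2$, $\tau(k)=3$ and defining $M'_{j,a} := M_{\tau(j),a}$ gives an $M'$ satisfying $G_{(i,1),(k,2)}$ and ${\det}_{R}M' = \sgn(\tau)^{m}{\det}_{R}M$. The matching identity ${\Det}_{S}M' = \sgn(\tau)\,{\Det}_{S}M$ reduces to showing, for every $\sigma$, that the product $\prod_{j}M_{\tau(j),\sigma(\tau(j))}$ (ordered by $j$) equals $\prod_{r}M_{r,\sigma(r)}$ (ordered by $r$). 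The only obstruction is when the non-commuting pair $\{M_{2,1},M_{3,2}\}$ both appear, that is, when $\sigma(2)=1$ and $\sigma(3)=2$; in that situation $M_{2,1}$ sits at position $i$ and $M_{3,2}$ at position $k$ in the $M'$-product. This is the main obstacle: because $M_{2,1}M_{3,2} \neq M_{3,2}M_{2,1}$ in general, the relative order of these two factors matters, and it is precisely to guarantee that $M_{2,1}$ precedes $M_{3,2}$ that I needed the column swap above to secure $i<k$. All other factors then lie in rows $\geq 4$ (apart from the unchanged row-$1$ factor at position $1$), so they commute with $M_{2,1}$, $M_{3,2}$, and one another; after moving those two into positions $2$ and $3$, the rest can be permuted freely into row order, and the sign factors cancel exactly as in part (a).
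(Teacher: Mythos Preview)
Your proof is correct and follows essentially the same strategy as the paper's: normalize the columns with the Column Permuting Lemma, then pass from $M$ to a row-permuted matrix $M'$ that satisfies the hypothesized SCC, and verify ${\Det}_{S}M' = \sgn(\tau)\,{\Det}_{S}M$ by tracking the single non-commuting pair through the products. Your part~(b) is in fact more explicit than the paper's sketch---you use a general $\tau$ with $\tau(1)=1$, $\tau(i)=2$, $\tau(k)=3$ and spell out why the relative order of $M_{2,1}$ and $M_{3,2}$ is preserved---and your device of swapping columns $1$ and $2$ to secure $i<k$ is a slightly different (but equally valid) way of handling the paper's ``without loss of generality.''
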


If Claim \ref{claim:reduce} is true, then it suffices to show that
$G_{(2,1),(2,2)}$ and $G_{(2,1),(3,2)}$ are not SCCs
(which we do in subsequent subsections).
Note that since no edge of $F_n$ connects vertices in the same column,
the two parts of Claim \ref{claim:reduce} exhaust the possibilities of the edge $AB$.

\begin{proof}
We consider part (a). Let $i$ be the row number of $A$ and $B$. Then $i \geq 2$.
Suppose that $G_{AB}$ is an SCC.
By the Column Permuting Lemma, $G_{(i,1),(i,2)}$ is an SCC.
We need to show that $G_{(2,1),(2,2)}$ is an SCC.

Let $R$ be a commutative ring, let $m$ be a positive integer, let $S \subset M_{m}(R)$ be a ring,
and let $M \in M_{n}(S) \subset M_{mn}(R)$ be a matrix satisfying $G_{(2,1),(2,2)}$.
We need to show that
\begin{equation} \label{eq:red1}
{\det}_{R}({\Det}_{S}M) = {\det}_{R}M.
\end{equation}
Let $M'$ be the matrix obtained from $M$ by swapping rows 2 and $i$. Then $M'$ satisfies
$G_{(i,1),(i,2)}$. Since $G_{(i,1),(i,2)}$ is an SCC, we have
\begin{equation} \label{eq:red2}
{\det}_{R}({\Det}_{S}M') = {\det}_{R}M'.
\end{equation}
Similarly to the proof of the Column Permuting Lemma,
if we view $M$ as a matrix over $R$,
it takes $m$ column swappings to transform $M$ into $M'$. Therefore,
\begin{equation} \label{eq:red2.5}
{\det}_{R}M' = (-1)^m{\det}_{R}M.
\end{equation}
We claim that
\begin{equation} \label{eq:red3}
{\Det}_{S}M' = -{\Det}_{S}M.
\end{equation}
If equation \eqref{eq:red3} is true, then
${\det}_{R}({\Det}_{S}M') = (-1)^m{\det}_{R}({\Det}_{S}M)$,
which, together with \eqref{eq:red2} and \eqref{eq:red2.5}, implies \eqref{eq:red1}.

We now prove equation \eqref{eq:red3}.
The two sides are equal in a commutative ring. Moreover, all pairs of matrix entries
that are not in row 1
\emph{commute} except $A$ and $B$. But $A$ and $B$ never appear together in the 
same monomial, since they are in the same row. Thus we can reorder indeterminates
in each product according to their row numbers. This, together with the identity
\eqref{eq:red3} in the
commutative case, implies that it holds in this setting too.

For part (b), the proof is similar to the proof above, so we provide only a sketch.
Let $i$ be the row number of $A$ and let $j$ be the row number of $B$.
Without loss of generality, suppose that $i <j$.
Suppose that $G_{AB}$ is an SCC. By the Column Permuting Lemma, $G_{(i,1),(j,2)}$ is an SCC.
The argument analogous to the above will show that $G_{(2,1),(3,2)}$ is an SCC:
even though $A$ and $B$ do appear together in the same monomial,
they will appear in the same ordering on both sides of equation \eqref{eq:red3}, since we assume $i < j$.
This, together with the fact that all other pairs of indeterminates that are not in row 1 commute, implies
that \eqref{eq:red3} holds.
\end{proof}

\subsection{Proof that $G_{(2,1),(2,2)}$ is not an SCC}
\label{subsect:delta}

Let $R = \mathbb{R}$ and let $S = M_{2}(R)$.
In $S$, let $I$ denote the identity matrix.
Let
$K := \begin{pmatrix} 
1 & \\
& 0
\end{pmatrix}$ and
$ L:= \begin{pmatrix} 
 & 0 \\
1 &
\end{pmatrix}$
in $S$.
Let $a$ be an arbitrary real number.
Let
$A := \begin{pmatrix} 
a &  \\
 & 0
\end{pmatrix}$ and 
$B := \begin{pmatrix} 
 & 1\\
0 & 
\end{pmatrix}$.
Let $M \in M_{n}(S) \subset M_{2n}(R)$ be the following block matrix:
$$M :=\begin{pmatrix}
K & L &  &  & \\
A & B &  &  & \\
 &  & I &  &  \\
 &  & & \ddots &  \\
 &  &  &  & I
\end{pmatrix}.
$$
Since every pair of blocks that are not in row 1 commutes except $A$ and $B$,
the matrix $M$ satisfies $G_{(2,1),(2,2)}$.
We show that the equation
$${\det}_{R}({\Det}_{S}M) = {\det}_{R}M$$
cannot hold for all values of $a$.

By the expansion of the determinant along row 1,
$${\det}_{R} M = \Cof_R^{11} M$$
is \emph{independent} of $a$.
So it suffices to prove that the value of ${\det}_{R}({\Det}_{S}M)$ is \emph{dependent}
on $a$. By the definition of noncommutative determinant,
$${\Det}_{S}M = KB-LA = \begin{pmatrix} & 1 \\ -a &  \end{pmatrix}.$$
Thus ${\det}_{R}({\Det}_{S}M) = a$ is dependent on $a$, as required.

\subsection{Proof that $G_{(2,1),(3,2)}$ is not an SCC}
\label{subsect:epsilon}

Let $R$, $S$, $I$, $K$, $L$, $a$, $A$ and $B$
be defined as in Section \ref{subsect:delta}.
Let $M \in M_{n}(S) \subset M_{2n}(R)$ be the following block matrix:
$$M :=\begin{pmatrix}
K & L &  &  &  \\
A & & I & &  \\
 & B & I & &  \\
 &  &  & \ddots &\\
 & &  &  & I
\end{pmatrix}.
$$
Then $M$ satisfies $G_{(2,1),(3,2)}$.
Similarly, we prove that the equation
\begin{equation} \label{eq:epsilon1}
{\det}_{R}({\Det}_{S}M) = {\det}_{R}M
\end{equation}
cannot hold for all values of $a$.
Similarly,
$${\det}_{R} M = \Cof_R^{11} M$$
is \emph{independent} of $a$.
However,
$${\Det}_{S}M = -KB-LA = \begin{pmatrix} & -1 \\ -a &  \end{pmatrix}.$$
Thus ${\det}_{R}({\Det}_{S}M) = -a$ is dependent on $a$, as required.

\subsection*{Acknowledgements}
I would like to thank Professor Bjorn Poonen for his supervision of this research project,
his advice on the direction of my work, and his comments on the exposition
of this paper.
This research is partially supported by Massachusetts Institute of Technology.

\bibliographystyle{plain}

\bibliography{paper}

\begin{thebibliography}{1}

\bibitem{bourbaki}
Nicolas Bourbaki.
\newblock {\em Algebra I: Chapters 1-3}.
\newblock Elements of Mathematics. Springer, 1989.

\bibitem{noncomdet}
Sergio Caracciolo, Andrea Sportiello, and Alan~D. Sokal.
\newblock Noncommutative determinants, {Cauchy--Binet} formulae, and
  {Capelli}-type identities. {I.} {Generalizations} of the {Capelli} and
  {Turnbull} identities.
\newblock {\em The Electronic Journal of Combinatorics}, 16(1), 2009.

\bibitem{kovacs}
Istvan Kovacs, Daniel~S. Silver, and Susan~G. Williams.
\newblock Determinants of commuting-block matrices.
\newblock {\em The American Mathematical Monthly}, 106(10):950--952, December
  1999.

\bibitem{silvester}
John~R. Silvester.
\newblock Determinants of block matrices.
\newblock {\em The Mathematical Gazette}, 84(501):460--467, November 2000.

\end{thebibliography}

\vspace{.2cm}
\noindent
Massachusetts Institute of Technology \\
\href{mailto:natsothanaphan@gmail.com}{\nolinkurl{natsothanaphan@gmail.com}}

\end{document}